\newcommand{\paperfont}{\fontsize{12pt}{1.3\baselineskip}\selectfont}
\begin{document}


\theoremstyle{definition}
\makeatletter
\thm@headfont{\bf}
\makeatother
\newtheorem{definition}{Definition}
\newtheorem{example}{Example}
\newtheorem{theorem}{Theorem}
\newtheorem{lemma}{Lemma}
\newtheorem{corollary}{Corollary}
\newtheorem{remark}{Remark}
\newtheorem{case}{Case}
\newtheorem{proposition}{Proposition}

\lhead{}
\rhead{}
\lfoot{}
\rfoot{}

\renewcommand{\refname}{References}
\renewcommand{\figurename}{Figure}
\renewcommand{\tablename}{Table}
\renewcommand{\proofname}{Proof}

\newcommand{\diag}{\mathrm{diag}}
\newcommand{\tr}{\mathrm{tr}}

\title{\textbf{A solution to the reversible embedding problem for finite Markov chains}}
\author{Chen Jia$^{1,2}$ \\
\footnotesize $^1$Beijing Computational Science Research Center, Beijing 100094, P.R. China. \\
\footnotesize $^2$Department of Mathematical Sciences, The University of Texas at Dallas, Richardson, Texas 75080, U.S.A. \\
\footnotesize Email: jiac@utdallas.edu}
\date{}                              
\maketitle                           
\thispagestyle{empty}                

\paperfont

\begin{abstract}
The embedding problem for Markov chains is a famous problem in probability theory and only partial results are available up till now. In this paper, we propose a variant of the embedding problem called the reversible embedding problem which has a deep physical and biochemical background and provide a complete solution to this new problem. We prove that the reversible embedding of a stochastic matrix, if it exists, must be unique. Moreover, we obtain the sufficient and necessary conditions for the existence of the reversible embedding and provide an effective method to compute the reversible embedding. Some examples are also given to illustrate the main results of this paper. \\

\noindent 
\textbf{Keywords}: imbedding problem, stochastic matrix, generator estimation, detailed balance
\end{abstract}

\section{Introduction}
In 1937, Elfving \cite{elfving1937theorie} proposed the following problem: given an $n\times n$ stochastic matrix $P$, can we find an $n\times n$ generator matrix $Q$ such that $P = e^Q$? This problem, which is referred to as the embedding problem for stochastic matrices or the embedding problem for finite Markov chains, is still an open problem in probability theory. Let $X = \{X_n:n\geq 0\}$ be a discrete-time homogeneous Markov chain with transition probability matrix $P$. The embedding problem is equivalent to asking whether we can find a continuous-time homogeneous Markov chain $Y = \{Y_t:t\geq 0\}$ with transition semigroup $\{P(t):t\geq 0\}$ such that $P = P(1)$. If this occurs, the discrete-time Markov chain $X$ can be embedded as the discrete skeleton of the continuous-time Markov chain $Y$.

The embedding problem has been studied for a long time \cite{kingman1962imbedding, runnenburg1962elfving, chung1967markov, speakman1967two, cuthbert1973logarithm, johansen1974some, singer1975social, singer1976representation, carette1995characterizations, davies2010embeddable, chen2011imbedding, guerry2013embedding}. So far, the embedding problem for $2\times 2$ stochastic matrices has been solved by Kendall and this result is published in Kingman \cite{kingman1962imbedding}. The embedding problem for $3\times 3$ stochastic matrices has also been solved owing to the work of Johansen \cite{johansen1974some}, Carette \cite{carette1995characterizations}, and Chen \cite{chen2011imbedding}. However, when the order $n$ of the stochastic matrix $P$ is larger than three, only partial results are available and our knowledge on the set of embeddable $n\times n$ stochastic matrices is quite limited. There is also an embedding problem for inhomogeneous Markov chains which has been dealt with by some authors \cite{goodman1970intrinsic, johansen1973central, frydman1979total, johansen1979bang, frydman1980embedding, frydman1980structure, frydman1983number, fuglede1988imbedding, lencastre2016empirical}. However, we only focus on the homogeneous case in this paper.

The embedding problem has wide applications in many scientific fields, such as social science \cite{singer1975social, singer1976representation}, mathematical finance \cite{israel2001finding}, statistics \cite{bladt2005statistical, metzner2007generator}, biology \cite{verbyla2013embedding, jia2014overshoot}, and manpower planning \cite{guerry2014some}. One of the most important applications of the embedding problem is the generator estimation problem in statistics. Let $Y = \{Y_t:t\geq 0\}$ be a continuous-time Markov chain with generator matrix $Q$. In practice, it often occurs that we can only observe a sufficiently long trajectory of $Y$ at several discrete times $0,T, 2T,\cdots,mT$ with time interval $T$. Let $P = (p_{ij})$ be the transition probability matrix of $Y$ at time $T$. Then $p_{ij}$ can be estimated by the maximum likelihood estimator
\begin{equation}
\hat{p}_{ij} = \frac{\sum_{k=0}^{m-1}I_{\{Y_{kT}=i,Y_{(k+1)T}=j\}}}{\sum_{k=0}^{m-1}I_{\{Y_{kT}=i\}}},
\end{equation}
where $I_A$ denotes the indicator function of $A$. A natural question is whether we can obtain an estimator $\hat{Q}$ of the generator matrix $Q$ from the estimator $\hat{P} = (\hat{p}_{ij})$ of the transition probability matrix $P$. It is reasonable to require the estimators $\hat{P}$ and $\hat{Q}$ to be related by $\hat{P} = e^{\hat{Q}T}$. Therefore, the generator estimation problem in statistics is naturally related to embedding problem for finite Markov chains.

Recently, the generator estimation problem has been widely studied in biology \cite{verbyla2013embedding, jia2014overshoot}, since a number of biochemical systems can be modeled by continuous-time Markov chains. In general, there are two types of Markov chains that must be distinguished, reversible chains and irreversible chains. In the reversible case, the detailed balance condition $\mu_iq_{ij} = \mu_jq_{ji}$ holds for any pair of states $i$ and $j$, where $\mu_i$ is the stationary probability of state $i$ and $q_{ij}$ is the transition rate from state $i$ to $j$ \cite{norris1998markov}. From the physical perspective, detailed balance is a thermodynamic constraint for closed systems. In other words, if there is no sustained energy supply, then a biochemical system must satisfy detailed balance \cite{qian2007phosphorylation}. In the modelling of many biochemical systems such as enzymes \cite{cornish2012fundamentals} and ion channels \cite{sakmann2009single}, detailed balance has become a basic assumption \cite{alberty2004principle}. Therefore, in many realistic biochemical systems, what we are concerned about is not simply to find a generator matrix $\hat{Q}$ such that $\hat{P} = e^{\hat{Q}T}$, but to find a reversible Markov chain with generator matrix $\hat{Q}$ such that $\hat{P} = e^{\hat{Q}T}$.

Here we consider the following problem: given an $n\times n$ stochastic matrix $P$, can we find a reversible generator matrix $Q$ such that $P = e^Q$? This problem will be referred to as the reversible embedding problem for stochastic matrices or the reversible embedding problem for finite Markov chains in this paper. Compared with the classical embedding problem, the reversible embedding problem has a deeper physical and biochemical background.

In this paper, we provide a complete solution to the reversible embedding problem. We prove that the reversible embedding of stochastic matrices, if it exists, must be unique. Moreover, we give the sufficient and necessary conditions for the existence of the reversible embedding and provide an effective method to compute the reversible embedding. Finally, we use some examples of $3\times 3$ stochastic matrices to illustrate the main results of this paper.

\section{Preliminaries}
For clarity, we recall several basic definitions.
\begin{definition}
An $n\times n$ real matrix $P=(p_{ij})$ is called a stochastic matrix if $p_{ij}\geq 0$ for any $i,j = 1,2,\cdots n$ and $\sum_{j=1}^np_{ij} = 1$ for any $i = 1,2,\cdots n$.
\end{definition}

\begin{definition}
An $n\times n$ real matrix $Q=(q_{ij})$ is called a generator matrix if $q_{ij}\geq 0$ for any $i\neq j$ and $\sum_{j=1}^nq_{ij} = 0$ for any $i = 1,2,\cdots n$.
\end{definition}

In this paper, we consider a fixed $n\times n$ stochastic matrix $P$. For simplicity, we assume that $P$ is irreducible. Otherwise, we may restrict our discussion to an irreducible recurrent class of $P$. Since $P$ is irreducible, it has a unique invariant distribution $\mu = (\mu_1,\mu_2,\cdots,\mu_n)$ whose components are all positive \cite{norris1998markov}.

\begin{definition}
$P$ is called reversible if the detailed balance condition $\mu_ip_{ij} = \mu_jp_{ji}$ holds for any $i,j = 1,2,\cdots,n$. In this case, $\mu$ is called a reversible distribution of $P$.
\end{definition}

\begin{definition}
Let $Q$ be an $n\times n$ generator matrix. Then $Q$ is called reversible if there exists a distribution $\pi = (\pi_1,\pi_2\cdots,\pi_n)$ such that the detailed balance condition $\pi_iq_{ij} = \pi_jq_{ji}$ holds for any $i,j = 1,2,\cdots,n$. In this case, $\pi$ is called a reversible distribution of $Q$.
\end{definition}

In fact, if $\pi$ is a reversible distribution of $Q$, then $\pi$ is also an invariant distribution of $Q$.

\begin{definition}
If there exists an $n\times n$ real matrix $A$ such that $P = e^A$, then $A$ is called a real logarithm of $P$.
\end{definition}

\begin{definition}
$P$ is called embeddable if there exists an $n\times n$ generator matrix $Q$ such that $P = e^Q$. In this case, $Q$ is called an embedding of $P$.
\end{definition}

It is easy to see that if $Q$ is a embedding of $P$, then $Q$ is also a real logarithm of $P$.

\section{Results}
In this paper, we shall address the reversible embedding problem for stochastic matrices. We first give the definition of reversible embeddability for stochastic matrices.

\begin{definition}
$P$ is called reversibly embeddable if there exists an $n\times n$ reversible generator matrix $Q$ such that $P = e^{Q}$. In this case, $Q$ is called a reversible embedding of $P$.
\end{definition}

\begin{lemma}
If $Q$ is a reversible embedding of $P$, then $Q$ is irreducible and $\mu$ is the reversible distribution of $Q$.
\end{lemma}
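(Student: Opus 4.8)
The plan is to establish the two assertions in order: first that $Q$ is irreducible, and then, using that irreducibility, that the reversible distribution $\pi$ of $Q$ must coincide with $\mu$.

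For irreducibility I would argue by contraposition. If $Q$ were reducible, there would be a permutation matrix $\Pi$ bringing it to block lower-triangular form $\Pi Q \Pi^{T} = \left(\begin{smallmatrix} A & 0 \\ B & C \end{smallmatrix}\right)$ with $A,C$ square. Since the class of matrices with vanishing upper-right block is closed under multiplication, every power $(\Pi Q \Pi^{T})^{k}$ shares this pattern, and hence so does the exponential $e^{\Pi Q \Pi^{T}} = \Pi e^{Q}\Pi^{T} = \Pi P \Pi^{T}$. This would force $P$ to be reducible, contradicting the standing assumption that $P$ is irreducible. Therefore $Q$ is irreducible.

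For the second claim I would exploit the symmetrizability that detailed balance provides. By the remark following the definition, the reversible distribution $\pi$ is an invariant distribution of $Q$; since $Q$ is now known to be irreducible, its invariant distribution is unique and strictly positive, so $\pi_i > 0$ for every $i$. Writing $D = \diag(\pi_1,\dots,\pi_n)$, the relation $\pi_i q_{ij} = \pi_j q_{ji}$ says exactly that $DQ$ is symmetric, so $S := D^{1/2} Q D^{-1/2} = D^{-1/2}(DQ)D^{-1/2}$ is a real symmetric matrix. Conjugating $P = e^{Q}$ by $D^{1/2}$ then yields $D^{1/2} P D^{-1/2} = e^{S}$, which is symmetric because $S$ is. Unwinding this shows $DP$ is symmetric, i.e. $\pi_i p_{ij} = \pi_j p_{ji}$ for all $i,j$, so $P$ satisfies detailed balance with respect to $\pi$. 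Summing over $i$ shows $\pi$ is an invariant distribution of $P$, and since $P$ is irreducible its invariant distribution is the unique $\mu$. Hence $\pi = \mu$, so $\mu$ is the reversible distribution of $Q$.

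The main obstacle is the positivity of $\pi$: the symmetrization step requires $D^{1/2}$ and $D^{-1/2}$, which only make sense when all $\pi_i > 0$. This is precisely why the irreducibility of $Q$ has to be secured first, since it is irreducibility that upgrades ``$\pi$ is an invariant distribution'' to ``$\pi$ is the unique, strictly positive invariant distribution.'' Once positivity is in hand, the identity $e^{D^{1/2}QD^{-1/2}} = D^{1/2} e^{Q} D^{-1/2}$ does the remaining work, transporting reversibility from $Q$ to $P$ essentially for free.
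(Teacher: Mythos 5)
Your proof is correct, and its two halves compare differently with the paper's. For irreducibility you give a rigorous version of exactly what the paper asserts as ``easy to see'': reducibility of $Q$ yields a permutation into block-triangular form, the zero pattern survives all powers and hence the exponential, so $P = e^{Q}$ would be reducible --- same idea, just made explicit. For the identification $\pi = \mu$, however, you take a genuinely different and longer route. The paper simply notes that $\pi$, being a reversible distribution of $Q$, is invariant for $Q$, i.e. $\pi Q = 0$; expanding the exponential then gives $\pi e^{Q} = \pi$, so $\pi$ is invariant for $P$, and uniqueness of the invariant distribution of the irreducible $P$ forces $\pi = \mu$. That argument needs neither positivity of $\pi$ nor any symmetrization. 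Your detour --- conjugating by $D^{1/2}$ to transport detailed balance from $Q$ to $P$ --- creates the positivity obstacle you describe (which you correctly resolve by establishing irreducibility of $Q$ first), but it buys a strictly stronger conclusion: $P$ is itself reversible with respect to $\pi = \mu$. That symmetrization is precisely the technique the paper deploys later, in Lemma \ref{assumptions}, to show that reversible embeddability implies reversibility of $P$; in effect your proof of this lemma absorbs part of that one. So: correct, heavier than necessary for the statement at hand, but the extra weight is not wasted.
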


\begin{proof}
If $Q$ has two or more communicating classes, it is easy to see that $P = e^Q$ also has two or more communicating classes, which contradicts the irreducibility of $P$. This shows that $Q$ is irreducible.

Since $Q$ is reversible, it has a reversible distribution $\pi = (\pi_1,\pi_2\cdots,\pi_n)$ which is also the invariant distribution of $Q$. Therefore, $\pi$ is the unique invariant distribution of $P = e^Q$. This shows that $\pi = \mu$.
\end{proof}

\begin{remark}
Let $X = \{X_n:n\geq 0\}$ be a stationary discrete-time Markov chain on the finite state space $S = \{1,2,\cdots,n\}$ with transition probability matrix $P = (p_{ij})$. According to the above lemma, if $P$ is reversibly embeddable with reversible embedding $Q$, then there exists a reversible continuous-time Markov chain $Y = \{Y_t:t\geq 0\}$ on the state space $S$ with generator matrix $Q$ such that $X$ and $\{Y_n: n\geq 0\}$ have the same distribution. Therefore, the stationary discrete-time Markov chain $X$ can be embedded as the discrete skeleton of the reversible continuous-time Markov chain $Y$. That is why the problem dealt with in this paper is called the reversible embedding problem for finite Markov chains.
\end{remark}

In this paper, a diagonal matrix with diagonal entries $a_1,a_2,\cdots,a_n$ will be denoted by $\diag(a_1,a_2,\cdots,a_n)$.

\begin{lemma}\label{basic}
If $P$ is reversible, then $P$ is diagonalizable and the eigenvalues of $P$ are all real numbers.
\end{lemma}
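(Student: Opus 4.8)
The plan is to exploit the detailed balance condition to symmetrize $P$ by a similarity transformation, reducing the problem to the spectral theorem for real symmetric matrices. First I would observe that reversibility $\mu_i p_{ij} = \mu_j p_{ji}$ can be rewritten, after dividing by $\sqrt{\mu_i \mu_j}$, as $\sqrt{\mu_i}\, p_{ij}\, (1/\sqrt{\mu_j}) = (1/\sqrt{\mu_i})\, p_{ji}\, \sqrt{\mu_j}$. This suggests introducing the diagonal matrix $D = \diag(\sqrt{\mu_1}, \sqrt{\mu_2}, \cdots, \sqrt{\mu_n})$, which is invertible because all $\mu_i > 0$ (guaranteed by the irreducibility assumption stated in the preliminaries).

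Next I would form the conjugated matrix $S = D P D^{-1}$ and compute its $(i,j)$ entry explicitly. The entry is $s_{ij} = \sqrt{\mu_i}\, p_{ij} / \sqrt{\mu_j} = \mu_i p_{ij} / \sqrt{\mu_i \mu_j}$. Using the detailed balance relation $\mu_i p_{ij} = \mu_j p_{ji}$ in the numerator, the entry becomes $s_{ij} = \mu_j p_{ji} / \sqrt{\mu_i \mu_j} = \sqrt{\mu_j}\, p_{ji} / \sqrt{\mu_i} = s_{ji}$. Hence $S$ is a real symmetric matrix.

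The conclusion then follows from standard linear algebra: a real symmetric matrix is orthogonally diagonalizable and has only real eigenvalues (the spectral theorem). Since $P = D^{-1} S D$ is similar to $S$, it has the same eigenvalues as $S$, so the eigenvalues of $P$ are all real. Moreover, similarity preserves diagonalizability, so $P$ is diagonalizable as well; concretely, if $S = O \Lambda O^{\mathsf{T}}$ with $O$ orthogonal and $\Lambda$ real diagonal, then $P = (D^{-1}O)\, \Lambda\, (D^{-1}O)^{-1}$ exhibits an explicit diagonalization of $P$.

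I do not anticipate a genuine obstacle here, since the argument is essentially the classical symmetrization trick for reversible Markov chains. The only point requiring a little care is the verification that the off-diagonal symmetrization uses detailed balance correctly and that $D$ is well-defined and invertible, both of which rest on the positivity of the invariant distribution $\mu$ guaranteed by irreducibility. Everything else reduces to invoking the spectral theorem for the real symmetric matrix $S$.
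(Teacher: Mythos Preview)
Your argument is correct and is essentially the same symmetrization trick the paper uses: conjugate $P$ by $\diag(\sqrt{\mu_1},\ldots,\sqrt{\mu_n})$ to obtain a real symmetric matrix, then invoke the spectral theorem. The only cosmetic issue is that you name this diagonal matrix $D$, which clashes with the paper's later use of $D$ for $\diag(\lambda_1,\ldots,\lambda_n)$; the paper calls your matrix $M$.
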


\begin{proof}
Let $M = \diag(\sqrt{\mu_1},\sqrt{\mu_2},\cdots,\sqrt{\mu_n})$. Since $\mu$ is a reversible distribution of $P$, it is easy to see that $S = MPM^{-1}$ is a symmetric matrix. Thus $S$ must be diagonalizable and the eigenvalues of $S$ are all real numbers. This shows that $P = M^{-1}SM$ is also diagonalizable and the eigenvalues of $P$ are all real numbers.
\end{proof}

\begin{lemma}\label{assumptions}
If $P$ is reversibly embeddable, then $P$ is reversible and the eigenvalues of $P$ are all positive.
\end{lemma}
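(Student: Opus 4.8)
The plan is to exploit the same diagonal symmetrization trick used in the proof of Lemma~\ref{basic}, but now applied to the reversible generator $Q$ rather than to $P$ directly. The preceding lemma already tells me that if $Q$ is a reversible embedding of $P$, then $\mu$ (the invariant distribution of $P$) is the reversible distribution of $Q$, so the detailed balance condition $\mu_iq_{ij} = \mu_jq_{ji}$ holds for all $i,j$. This is the hook that lets me transfer structure from $Q$ to $P$.

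First, I would set $M = \diag(\sqrt{\mu_1},\sqrt{\mu_2},\cdots,\sqrt{\mu_n})$ and define $S = MQM^{-1}$. Exactly as in Lemma~\ref{basic}, the detailed balance condition for $Q$ forces $S_{ij} = \sqrt{\mu_i/\mu_j}\,q_{ij} = \sqrt{\mu_j/\mu_i}\,q_{ji} = S_{ji}$, so $S$ is a symmetric matrix. The key observation is then that conjugation commutes with the matrix exponential: since $P = e^Q$, I get
\begin{equation}
MPM^{-1} = M e^{Q} M^{-1} = e^{MQM^{-1}} = e^{S}.
\end{equation}
Because $S$ is symmetric, $e^{S}$ is symmetric as well, and hence $MPM^{-1}$ is symmetric. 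Unwinding this similarity gives $\sqrt{\mu_i/\mu_j}\,p_{ij} = \sqrt{\mu_j/\mu_i}\,p_{ji}$, which is precisely $\mu_ip_{ij} = \mu_jp_{ji}$. This establishes that $P$ is reversible, with the same reversible distribution $\mu$.

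For the eigenvalue claim, I would note that $Q$ is similar to the symmetric matrix $S$, so all eigenvalues of $Q$ are real. The eigenvalues of $P = e^{Q}$ are then exactly the exponentials $e^{\lambda}$ of the eigenvalues $\lambda$ of $Q$, and since each such $\lambda$ is real, every eigenvalue $e^{\lambda}$ of $P$ is strictly positive. The only point requiring a little care is the justification that the spectrum of $e^{Q}$ consists precisely of the exponentials of the spectrum of $Q$; I expect this to be the main (minor) obstacle, and it is cleanly handled either by the spectral mapping theorem or by diagonalizing $S$ explicitly, after which the conclusion is immediate. Otherwise the entire argument is a direct adaptation of the symmetrization already carried out in Lemma~\ref{basic}.
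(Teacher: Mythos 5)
Your proof is correct and follows essentially the same route as the paper: symmetrize $Q$ via $M = \diag(\sqrt{\mu_1},\cdots,\sqrt{\mu_n})$ using detailed balance, conclude $MPM^{-1} = e^{S}$ is symmetric so $P$ is reversible, and deduce positivity of the eigenvalues of $P = e^{Q}$ from the reality of the eigenvalues of $Q$. Your explicit appeal to the preceding lemma (to identify $\mu$ as the reversible distribution of $Q$) and to the spectral mapping theorem merely spells out steps the paper leaves implicit.
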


\begin{proof}
Since $P$ is reversibly embeddable, there exists a reversible generator matrix $Q$ such that $P = e^Q$. Let $M = \diag(\sqrt{\mu_1},\sqrt{\mu_2},\cdots,\sqrt{\mu_n})$. Since $\mu$ is a reversible distribution of $Q$, it is easy to see that $S = MQM^{-1}$ is a symmetric matrix. Thus $e^S = MPM^{-1}$ is also a symmetric matrix. This shows that $P$ is reversible. Since $Q$ is reversible, the eigenvalues of $Q$ are all real numbers. Thus the eigenvalues of $P = e^Q$ are all positive.
\end{proof}

Due to the above lemma, we always assume that $P$ is reversible and the eigenvalues of $P$ are all positive in the sequel. Let $\lambda_1,\lambda_2,\cdots,\lambda_n$ be all the eigenvalues of $P$, where $\lambda_i > 0$ for any $i = 1,2,\cdots n$. Let $\gamma_1,\gamma_2,\cdots,\gamma_m$ be the mutually different eigenvalues of $P$, where $m\leq n$. Let $n_i$ be the multiplicity of the eigenvalue $\gamma_i$ for any $i = 1,2,\cdots,m$. It is easy to see that $n_1+n_2+\cdots+n_m = n$. Let
\begin{equation}
D = \diag(\lambda_1,\lambda_2,\cdots,\lambda_n) = \diag(\gamma_1,\cdots,\gamma_1,\gamma_2,\cdots,\gamma_2,\cdots,\gamma_m,\cdots,\gamma_m).
\end{equation}
and let
\begin{equation}
\begin{split}
\log D &= \diag(\log\lambda_1,\log\lambda_2,\cdots,\log\lambda_n) \\
&= \diag(\log\gamma_1,\cdots,\log\gamma_1,\log\gamma_2,\cdots,\log\gamma_2,\cdots,
\log\gamma_m,\cdots,\log\gamma_m).
\end{split}
\end{equation}
Since $P$ is reversible, according to Lemma \ref{basic}, there exists an $n\times n$ real invertible matrix $T$ such that $P = TDT^{-1}$. Let $H$ be the matrix defined as
\begin{equation}\label{H}
H = T\log DT^{-1}.
\end{equation}
It is easy to see that $e^H = TDT^{-1} = P$, which shows that $H$ is a real logarithm of $P$.

It has been known for a long time that the embedding of a stochastic matrix may not be unique \cite{speakman1967two}. However, this is not the case when it comes to the reversible embedding. The following theorem, which is the first main result of this paper, reveals the uniqueness of the reversible embedding. It shows that the reversible embedding of a stochastic matrix, if it exists, must be unique.

\begin{theorem}\label{uniqueness}
$P$ has at most one reversible embedding. If $P$ is reversibly embeddable, then the unique reversible embedding of $P$ must be $H$.
\end{theorem}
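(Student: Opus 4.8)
The plan is to show that \emph{every} reversible embedding $Q$ of $P$ is forced to coincide with the specific real logarithm $H$; this simultaneously establishes that there is at most one reversible embedding and identifies it with $H$. First I would take an arbitrary reversible embedding $Q$ and reuse the symmetrization that already appears in Lemmas~\ref{basic} and \ref{assumptions}. By the first lemma of this section, $\mu$ is the reversible distribution of $Q$, so with $M = \diag(\sqrt{\mu_1},\dots,\sqrt{\mu_n})$ the matrix $S = MQM^{-1}$ is symmetric. Consequently $Q$ is diagonalizable and all its eigenvalues are real. Moreover $Q$ and $P$ share eigenvectors: any $v$ with $Qv = \beta v$ satisfies $Pv = e^{Q}v = e^{\beta}v$, so $v$ is an eigenvector of $P$ with eigenvalue $e^{\beta}$.

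The heart of the argument is to pin down the eigenvalues of $Q$ using the positivity of the spectrum of $P$. Since $Q$ is diagonalizable, $\mathbb{R}^n$ is the direct sum of the eigenspaces $E_\beta(Q)$, and for each eigenvalue $\lambda$ of $P$ one has $E_\lambda(P) = \bigoplus_{\beta:\,e^{\beta}=\lambda} E_\beta(Q)$. Now every eigenvalue $\beta$ of $Q$ is real and every eigenvalue $\lambda$ of $P$ is positive, and the real exponential $x\mapsto e^{x}$ is injective; hence for each such $\lambda$ the only contributing eigenvalue of $Q$ is $\beta = \log\lambda$. This yields the exact identification $E_\lambda(P) = E_{\log\lambda}(Q)$, so $Q$ acts on the $\lambda$-eigenspace of $P$ precisely as multiplication by $\log\lambda$.

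Since $H = T\log D\,T^{-1}$ acts on the eigenspace of $P$ for $\gamma_k$ as multiplication by $\log\gamma_k$, the linear maps $Q$ and $H$ agree on a basis of eigenvectors of $P$, whence $Q = H$. As $Q$ was an arbitrary reversible embedding, $P$ has at most one reversible embedding, and whenever one exists it must equal $H$. I expect the main obstacle to be justifying the eigenspace identification $E_\lambda(P) = E_{\log\lambda}(Q)$ cleanly: this is exactly the step where reversibility is indispensable, for it forces the eigenvalues of $Q$ to be real and thereby removes the ambiguity $e^{\beta} = e^{\beta + 2\pi i k}$ that underlies the well-known non-uniqueness of the classical (non-reversible) embedding. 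Some care is also needed for repeated eigenvalues, but the diagonalizability of $Q$ reduces everything to the action on eigenspaces and makes the comparison with $H$ routine.
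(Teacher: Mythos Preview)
Your argument is correct. The eigenspace identification $E_\lambda(P)=E_{\log\lambda}(Q)$ is justified exactly as you indicate: diagonalizability of $Q$ gives $\mathbb{R}^n=\bigoplus_\beta E_\beta(Q)$, each $E_\beta(Q)\subseteq E_{e^\beta}(P)$, and dimension counting together with the injectivity of $x\mapsto e^x$ on $\mathbb{R}$ forces equality with a single $\beta=\log\lambda$ contributing. Once $Q$ and $H$ agree on every $E_\lambda(P)$ and these span $\mathbb{R}^n$, you are done.

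The paper's proof reaches the same conclusion by a slightly different route. It introduces the notion of a \emph{candidate} of $P$ (any matrix of the form $T\log D\,T^{-1}$ with $P=TDT^{-1}$), and splits the work into two lemmas: first, any reversible embedding is a candidate; second, any two candidates coincide (via the observation that $SD=DS$ forces $S\log D=\log D\,S$). To show a reversible embedding is a candidate, the paper proves a separate Jordan-block lemma to deduce diagonalizability of $Q$ from diagonalizability of $P=e^Q$. Your approach is more economical here: you obtain diagonalizability of $Q$ directly from the symmetry of $MQM^{-1}$, bypassing the Jordan-block computation entirely, and you replace the commutation argument by the more transparent ``same action on each eigenspace'' comparison. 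The paper's candidate framework has the minor advantage that its uniqueness lemma applies to any diagonalizable $P$ with positive spectrum (not just reversible $P$), but for the theorem at hand your direct argument is cleaner.
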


\begin{proof}
This theorem will be proved in Section \ref{proof1}.
\end{proof}

We next study the existence of the reversible embedding.

\begin{lemma}\label{condition}
Assume that $P$ is reversible and the eigenvalues of $P$ are all positive. Then $P$ is reversibly embeddable if and only if $H$ is a reversible generator matrix.
\end{lemma}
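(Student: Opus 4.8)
The goal is to prove Lemma~\ref{condition}: assuming $P$ is reversible with all positive eigenvalues, $P$ is reversibly embeddable if and only if $H$ is a reversible generator matrix. The plan is to establish both directions, using the uniqueness result of Theorem~\ref{uniqueness} as the crucial bridge for the harder implication.

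The reverse direction is immediate: if $H$ is a reversible generator matrix, then since $e^H = P$ holds by construction (equation~\eqref{H}), $H$ is by definition a reversible embedding of $P$, so $P$ is reversibly embeddable. This requires no real work beyond quoting the definition of reversible embeddability and the identity $e^H=P$ already verified in the excerpt.

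The forward direction is where the content lies. Suppose $P$ is reversibly embeddable. Then by definition there exists some reversible generator matrix $Q$ with $P=e^Q$; this $Q$ is a reversible embedding of $P$. Now I invoke Theorem~\ref{uniqueness}, which asserts that the reversible embedding, if it exists, must equal $H$. Hence $Q=H$, and since $Q$ is a reversible generator matrix, $H$ is a reversible generator matrix. This completes the equivalence.

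The main obstacle is purely one of logical dependency rather than calculation: the forward direction rests entirely on Theorem~\ref{uniqueness}, whose own proof is deferred to a later section. So the honest difficulty is hidden upstream — one must be sure that the uniqueness theorem does not itself secretly assume the conclusion of this lemma, creating a circular argument. I would check that Theorem~\ref{uniqueness} is proved independently (it should follow from structural properties of the real logarithm $H$ and the diagonalizability afforded by Lemma~\ref{basic}, not from any embeddability hypothesis). Granting that independence, the proof of Lemma~\ref{condition} is a short two-paragraph deduction: one trivial direction from the definition, and one direction that simply identifies the abstract embedding $Q$ with the explicit candidate $H$ via uniqueness.
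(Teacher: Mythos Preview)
Your proof is correct and matches the paper's approach exactly: the paper's own proof is the single sentence ``This lemma follows directly from Theorem~\ref{uniqueness},'' and you have simply spelled out the two directions that this entails. Your remark about checking for circularity is well taken, and indeed Theorem~\ref{uniqueness} is proved in Section~\ref{proof1} using only the structure of candidates of $P$ (Lemmas~\ref{candidate} and~\ref{unique}), independently of Lemma~\ref{condition}.
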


\begin{proof}
This lemma follows directly from Theorem \ref{uniqueness}.
\end{proof}

In general, it is difficult to determine whether $H$ is a reversible generator matrix. Thus we hope to obtain simpler sufficient and necessary conditions for a stochastic matrix being reversibly embeddable. Let $k_0,k_1,\cdots,k_{m-1}\in\mathbb{R}$ be the solution to the following system of linear equations:
\begin{equation}\label{linear}\left\{
\begin{split}
k_0+k_1\gamma_1+\cdots+k_{m-1}\gamma_1^{m-1} &= \log\gamma_1, \\
k_0+k_1\gamma_2+\cdots+k_{m-1}\gamma_2^{m-1} &= \log\gamma_2, \\
\cdots &\cdots \\
k_0+k_1\gamma_m+\cdots+k_{m-1}\gamma_m^{m-1} &= \log\gamma_m.
\end{split}\right.
\end{equation}
Since
\begin{equation}
\det\begin{pmatrix}
1 &\gamma_1 &\cdots &\gamma_1^{m-1} \\
1 &\gamma_2 &\cdots &\gamma_2^{m-1} \\
\cdots &\cdots &\cdots &\cdots \\
1 &\gamma_m &\cdots &\gamma_m^{m-1}
\end{pmatrix}
= \sum_{1\leq i<j\leq m}(\gamma_j-\gamma_i)\neq 0,
\end{equation}
the solution to the above system of linear equations exists and is unique. From \eqref{linear}, it is easy to see that
\begin{equation}
\log D = k_0I+k_1D+\cdots+k_{m-1}D^{m-1},
\end{equation}
which implies that
\begin{equation}\label{combination}
H = k_0I+k_1P+\cdots+k_{m-1}P^{m-1}.
\end{equation}

The following theorem, which is the second main result of this paper, gives the sufficient and necessary conditions for the existence of the reversible embedding.

\begin{theorem}\label{existence}
Let $P^k = (p_{ij}^{(k)})$ for any $k\geq 1$. Then $P$ is reversibly embeddable if and only if the following two conditions hold: \\
(i) $P$ is reversible and the eigenvalues of $P$ are all positive; \\
(ii) $k_1p_{ij}+k_2p_{ij}^{(2)}+\cdots+k_{m-1}p_{ij}^{(m-1)}\geq 0$ for any $i\neq j$.
\end{theorem}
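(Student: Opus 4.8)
The plan is to reduce everything to Lemma \ref{condition}, which already states that, \emph{under condition (i)}, $P$ is reversibly embeddable if and only if $H$ is a reversible generator matrix. Thus the necessity of (i) is immediate from Lemma \ref{assumptions}, and the whole theorem collapses to a single claim: assuming (i), the matrix $H$ is a reversible generator matrix if and only if condition (ii) holds. To prove this claim I would start from the polynomial representation \eqref{combination}, namely $H = k_0I+k_1P+\cdots+k_{m-1}P^{m-1}$, and then check the defining properties of a reversible generator matrix one at a time, showing that every property except the off-diagonal sign condition is automatic once (i) is assumed.

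Next I would verify the two automatic properties. For the \emph{zero row sums}, note that since $P$ is stochastic, $1$ is always an eigenvalue of $P$, so $1$ is one of the $\gamma_i$; the corresponding equation of the linear system \eqref{linear} reads $k_0+k_1+\cdots+k_{m-1}=\log 1=0$. Because $P^k\mathbf{1}=\mathbf{1}$ for every $k$, where $\mathbf{1}$ is the all-ones column vector, this gives $H\mathbf{1}=(k_0+k_1+\cdots+k_{m-1})\mathbf{1}=0$, so every row of $H$ sums to zero. For the \emph{detailed balance}, write $D_\mu=\diag(\mu_1,\ldots,\mu_n)$; reversibility of $P$ says that $D_\mu P$ is symmetric, equivalently $D_\mu P=P^{\mathsf{T}}D_\mu$, and iterating yields $D_\mu P^k=(P^k)^{\mathsf{T}}D_\mu$ for every $k\geq 1$, i.e.\ $\mu_i p_{ij}^{(k)}=\mu_j p_{ji}^{(k)}$. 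Since $H$ is a \emph{real} linear combination of $I,P,\ldots,P^{m-1}$, it inherits this symmetry: $\mu_i h_{ij}=\mu_j h_{ji}$ for all $i,j$. Combined with $H\mathbf{1}=0$ this also gives $\mu^{\mathsf{T}}H=\mathbf{1}^{\mathsf{T}}D_\mu H=\mathbf{1}^{\mathsf{T}}H^{\mathsf{T}}D_\mu=(H\mathbf{1})^{\mathsf{T}}D_\mu=0$, so $\mu$ is an invariant distribution of $H$ and hence a genuine reversible distribution for it.

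Finally, the only remaining requirement for $H$ to be a reversible generator matrix is that its off-diagonal entries be nonnegative. Since the diagonal term $k_0I$ contributes nothing off the diagonal, for $i\neq j$ we have $h_{ij}=k_1p_{ij}+k_2p_{ij}^{(2)}+\cdots+k_{m-1}p_{ij}^{(m-1)}$, which is exactly the quantity in condition (ii). Therefore, under (i), $H$ is a reversible generator matrix if and only if (ii) holds, and combining this with Lemma \ref{condition} proves the theorem: for necessity, reversible embeddability forces (i) by Lemma \ref{assumptions} and then (ii) by reading off the signs of the off-diagonal entries of $H$; for sufficiency, (i) and (ii) make $H$ a reversible generator matrix, so $P=e^{H}$ is reversibly embeddable. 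I do not anticipate a serious obstacle, as the argument is largely bookkeeping, but the step deserving the most care is the propagation of detailed balance to all powers $P^k$, together with the observation that the stochasticity of $P$ (via the eigenvalue $1$) is precisely what makes the row sums of $H$ vanish, so that condition (ii) alone decides embeddability.
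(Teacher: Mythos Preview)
Your proof is correct and follows the same global strategy as the paper: reduce via Lemma \ref{assumptions} and Lemma \ref{condition} to the question of whether $H$ is a reversible generator matrix, then show that the row-sum condition and the detailed-balance condition are automatic under (i), leaving only the off-diagonal sign condition, which is exactly (ii) by \eqref{combination}.

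Where you differ from the paper is in how you establish the two automatic properties. The paper proves them as separate lemmas (Lemmas \ref{reversible} and \ref{zero}): for detailed balance it symmetrizes via $M=\diag(\sqrt{\mu_i})$ and uses an orthogonal diagonalization of $MPM^{-1}$ to produce a symmetric conjugate of $H$; for the zero row sums it argues from $e^{H}\mathbf{1}=\mathbf{1}$ by introducing the auxiliary matrix $B=\sum_{k\geq 0}H^{k}/(k+1)!$, showing $B$ is invertible and $BH\mathbf{1}=0$. Your arguments are more elementary and exploit the polynomial formula \eqref{combination} directly: the row sums vanish because $1$ is one of the $\gamma_i$, forcing $k_0+\cdots+k_{m-1}=0$, and detailed balance propagates from $P$ to every $P^{k}$ by iterating $D_\mu P=P^{\mathsf{T}}D_\mu$. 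Both routes are valid; yours is shorter and avoids the power-series detour, while the paper's Lemma \ref{zero} has the small advantage of not relying on the polynomial representation (it works for any real logarithm of a stochastic matrix with the stated spectrum).
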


\begin{proof}
This theorem will be proved in Section \ref{proof2}.
\end{proof}

From \eqref{combination}, it is easy to see that the condition (ii) in the above theorem is equivalent to saying that the off-diagonal entries of $H$ are all nonnegative.

The following classical result about the embeddability for $2\times 2$ stochastic matrices is due to Kendall and is published in Kingman \cite{kingman1962imbedding}. Interestingly, the above theorem gives a simple derivation of this classical result.

\begin{theorem}[Kendall]
Let $P$ be a $2\times 2$ irreducible stochastic matrix. Then the following four statements are equivalent: \\
(i) $P$ is embeddable; \\
(ii) $P$ is reversibly embeddable; \\
(iii) $\tr(P) > 1$; \\
(iv) $\det(P) > 0$.
\end{theorem}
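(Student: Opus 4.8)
The plan is to prove the four-way equivalence through a short cycle of implications, pushing all of the substantive work onto Theorem \ref{existence}. First I would record the elementary algebra of a $2\times 2$ irreducible stochastic matrix. Writing $P = \begin{pmatrix} 1-a & a \\ b & 1-b\end{pmatrix}$ with $a,b>0$ (irreducibility forces both off-diagonal entries to be positive), one computes $\tr(P) = 2-a-b$ and $\det(P) = 1-a-b$, so that $\tr(P) = 1 + \det(P)$. This identity yields (iii) $\iff$ (iv) at once, since $\tr(P) > 1 \iff \det(P) > 0$. I would also note here that the eigenvalues of $P$ are $1$ and $1-a-b = \det(P)$, and that irreducibility forces $\det(P) < 1$, so the two eigenvalues are always distinct.

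Next come the two easy implications. For (ii) $\Rightarrow$ (i), a reversible generator matrix is in particular a generator matrix, so a reversible embedding is an embedding. For (i) $\Rightarrow$ (iv), if $P = e^Q$ for some generator matrix $Q$, then $\det(P) = \det(e^Q) = e^{\tr(Q)} > 0$, giving (iv).

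The one real step is (iv) $\Rightarrow$ (ii), where I would invoke Theorem \ref{existence}. The crucial preliminary is that every $2\times 2$ irreducible stochastic matrix is automatically reversible: its invariant distribution is $\mu = (b,a)/(a+b)$, and one checks directly that $\mu_1 p_{12} = ab/(a+b) = \mu_2 p_{21}$, so detailed balance holds. Consequently condition (i) of Theorem \ref{existence} reduces to positivity of the eigenvalues $1$ and $\det(P)$, which is exactly the hypothesis $\det(P) > 0$. Since the eigenvalues are distinct we have $m=2$, and the system \eqref{linear} gives $k_1 = (\log\gamma_1 - \log\gamma_2)/(\gamma_1 - \gamma_2)$; because $\log$ is strictly increasing and both eigenvalues are positive, numerator and denominator share a sign, so $k_1 > 0$. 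Hence $k_1 p_{ij} > 0$ for the off-diagonal entries $p_{12} = a > 0$ and $p_{21} = b > 0$, so condition (ii) of Theorem \ref{existence} holds and $P$ is reversibly embeddable.

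The cycle (ii) $\Rightarrow$ (i) $\Rightarrow$ (iv) $\Rightarrow$ (ii), together with (iii) $\iff$ (iv), closes the argument. I do not expect a genuine obstacle: all the nontrivial content is already packaged in Theorem \ref{existence}, and the only hand computation is the sign of the single coefficient $k_1$, which follows from monotonicity of the logarithm. The point that most deserves care is the automatic reversibility of $2\times 2$ irreducible chains, since without it Theorem \ref{existence} could not be brought to bear; everything else is routine verification.
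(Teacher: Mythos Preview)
Your proposal is correct and, like the paper, pushes the real work onto Theorem~\ref{existence}; the computation of $k_1$ and its positivity are handled essentially identically. The one genuine difference is how you link (i) to the rest. The paper argues (i)~$\Leftrightarrow$~(ii) directly, by observing that an embedding $Q$ of an irreducible $P$ must itself be irreducible, and that any $2\times 2$ irreducible generator matrix is automatically reversible; hence every embedding is already a reversible embedding. You instead take the route (i)~$\Rightarrow$~(iv) via the identity $\det(e^Q)=e^{\tr Q}>0$, and then close the cycle through (iv)~$\Rightarrow$~(ii). Your approach is slightly slicker in that it avoids discussing the structure of $Q$ at all, while the paper's approach yields the marginally stronger statement that in the $2\times 2$ case every embedding is reversible (not merely that embeddability implies reversible embeddability). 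Either route is perfectly adequate here.
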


\begin{proof}
It is easy to see that (iii) and (iv) are equivalent. Thus we only need to prove that (i), (ii), and (iii) are equivalent.

Assume that $Q$ is an embedding of $P$. If $Q$ has two or more communicating classes, it is easy to see that $P = e^Q$ also has two or more communicating classes. This contradicts the irreducibility of $P$. This shows that $Q$ is irreducible. It is easy to see that a $2\times 2$ irreducible generator matrix $Q$ must be reversible. Thus $Q$ is a reversible embedding of $P$. This shows that (i) and (ii) are equivalent.

We next prove that (ii) and (iii) are equivalent. Let
\begin{equation}
P = \begin{pmatrix}
p & 1-p \\
1-q & q \\
\end{pmatrix}.
\end{equation}
The irreducibility of $P$ implies that $p,q < 1$. It is easy to see that $P$ is reversible and the two eigenvalues of $P$ are $\gamma_1 = 1$ and $\gamma_2 = p+q-1 < 1$. Thus the condition (i) in Theorem \ref{existence} is equivalent to saying that $p+q > 1$. Assume that $p+q > 1$. Let $k_0$ and $k_1$ be the solution to the following system of linear equations:
\begin{equation}\left\{
\begin{split}
k_0+k_1\gamma_1 &= \log\gamma_1, \\
k_0+k_1\gamma_2 &= \log\gamma_2.
\end{split}\right.
\end{equation}
It is easy to check that
\begin{equation}
k_1 = -k_0 = \frac{\log(p+q-1)}{p+q-2}.
\end{equation}
Since $1 < p+q < 2$, it is easy to see that $k_1 > 0$. Thus the off-diagonal entries of $k_1P$ are all nonnegative. This shows that the condition (ii) in Theorem \ref{existence} holds. Thus $p+q > 1$ implies the condition (ii) in Theorem \ref{existence}. By Theorem \ref{existence}, $P$ is reversibly embeddable if and only if $p+q > 1$, that is, $\tr(P) > 1$. This shows that (ii) and (iii) are equivalent.
\end{proof}

As another application of Theorem \ref{existence}, we give some simple and direct criteria for a $3\times 3$ stochastic matrix being reversible embeddable. Let $P$ be a $3\times 3$ irreducible stochastic matrix. The Perron-Frobenius theorem \cite{berman1979nonnegative} claims that one eigenvalue of $P$ must be 1 and the absolute values of the other two eigenvalues must be less than 1.

We first consider the case where $P$ has a pair of coincident eigenvalues.

\begin{theorem}\label{coincident}
Let $P$ be a $3\times 3$ irreducible stochastic matrix with eigenvalues $1$, $\lambda$, and $\lambda$. Then $P$ is reversibly embeddable if and only if $P$ is reversible and $\lambda > 0$.
\end{theorem}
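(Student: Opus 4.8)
The plan is to deduce both directions from Theorem \ref{existence}, since the hypothesis already fixes the eigenvalue structure. The forward implication is immediate: if $P$ is reversibly embeddable, then Lemma \ref{assumptions} directly gives that $P$ is reversible and all eigenvalues are positive, so in particular $\lambda > 0$. No computation is needed here.

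For the converse, I would first pin down the distinct-eigenvalue data. By the Perron--Frobenius statement quoted above, the eigenvalue $1$ is simple and $|\lambda| < 1$, so under the assumption $\lambda > 0$ we have $0 < \lambda < 1$ and in particular $\lambda \neq 1$. Hence $P$ has exactly two distinct eigenvalues $\gamma_1 = 1$ and $\gamma_2 = \lambda$, i.e. $m = 2$, with multiplicities $n_1 = 1$ and $n_2 = 2$. Condition (i) of Theorem \ref{existence} then holds by hypothesis, so everything reduces to verifying condition (ii).

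Next I would solve the linear system \eqref{linear}, which in the case $m = 2$ reads
\begin{equation*}
\left\{
\begin{split}
k_0 + k_1 &= \log 1 = 0, \\
k_0 + k_1\lambda &= \log\lambda.
\end{split}\right.
\end{equation*}
Subtracting the first equation from the second yields $k_1(\lambda - 1) = \log\lambda$, hence $k_1 = \log\lambda/(\lambda - 1)$ and $k_0 = -k_1$. Since $0 < \lambda < 1$ forces $\log\lambda < 0$ and $\lambda - 1 < 0$, the quotient is strictly positive, so $k_1 > 0$. Because $m - 1 = 1$, condition (ii) of Theorem \ref{existence} is simply $k_1 p_{ij} \geq 0$ for all $i \neq j$, and this holds since $k_1 > 0$ and the off-diagonal entries $p_{ij}$ of the stochastic matrix $P$ are nonnegative. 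Both conditions of Theorem \ref{existence} are therefore satisfied, and $P$ is reversibly embeddable.

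I do not expect a genuine obstacle in this argument; the content is essentially a one-parameter specialization of Theorem \ref{existence}. The only point requiring care is the bookkeeping that $m = 2$, i.e. that $\lambda$ really is distinct from $1$ so that the system \eqref{linear} has the claimed $2 \times 2$ form. This is exactly where the Perron--Frobenius simplicity of the eigenvalue $1$ (together with $\lambda > 0 \Rightarrow 0 < \lambda < 1$) is used, and it also guarantees the positivity of the denominator $\lambda - 1$ that drives the sign computation for $k_1$.
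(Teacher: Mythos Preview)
Your proposal is correct and follows essentially the same route as the paper: both reduce to Theorem \ref{existence}, identify the two distinct eigenvalues $\gamma_1=1$, $\gamma_2=\lambda$ via Perron--Frobenius, solve the $2\times 2$ system to get $k_1=\log\lambda/(\lambda-1)>0$, and conclude that condition (ii) is automatic because the off-diagonal entries of $k_1P$ are nonnegative. Your write-up is slightly more explicit in separating the forward implication (via Lemma \ref{assumptions}) and in justifying $m=2$, but there is no substantive difference.
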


\begin{proof}
The condition (i) in Theorem \ref{existence} is equivalent to saying that $P$ is reversible and $\lambda > 0$. Assume that $\lambda > 0$. The Perron-Frobenius theorem implies that $0<\lambda<1$. Since $P$ has a pair of coincident eigenvalues, the mutually different eigenvalues of $P$ are $\gamma_1 = 1$ and $\gamma_2 = \lambda$. Let $k_0$ and $k_1$ be the solution to the following system of linear equations:
\begin{equation}\left\{
\begin{split}
k_0+k_1\gamma_1 &= \log\gamma_1, \\
k_0+k_1\gamma_2 &= \log\gamma_2.
\end{split}\right.
\end{equation}
Straightforward calculations show that
\begin{equation}
k_1 = -k_0 = \frac{\log\lambda}{\lambda-1}.
\end{equation}
Since $0<\lambda<1$, it is easy to see that $k_1>0$. Thus the off-diagonal entries of $k_1P$ are all nonnegative. This shows that the condition (ii) in Theorem \ref{existence} holds. Thus $\lambda > 0$ implies the condition (ii) in Theorem \ref{existence}. By Theorem \ref{existence}, $P$ is reversibly embeddable if and only if $P$ is reversible and $\lambda > 0$.
\end{proof}

We next consider the case where $P$ has three different eigenvalues.

\begin{theorem}\label{different}
Let $P$ be a $3\times 3$ irreducible stochastic matrix with distinct eigenvalues $1$, $\lambda$, and $\eta$. Then $P$ is reversibly embeddable if and only if the following two conditions hold: \\
(i) $P$ is reversible and $\lambda,\eta > 0$; \\
(ii) $k_1p_{ij}+k_2p_{ij}^{(2)}\geq 0$ for any $i\neq j$, where
\begin{equation}\label{expressions}
\begin{split}
k_1 &= \frac{(\eta^2-1)\log\lambda-(\lambda^2-1)\log\eta}
{(\lambda-1)(\eta-1)(\eta-\lambda)}, \\
k_2 &= \frac{(\lambda-1)\log\eta-(\eta-1)\log\lambda}
{(\lambda-1)(\eta-1)(\eta-\lambda)}.
\end{split}
\end{equation}
\end{theorem}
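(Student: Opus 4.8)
The plan is to obtain this result as a direct specialization of Theorem~\ref{existence} to the case $n=m=3$. Since $P$ has three distinct eigenvalues $1,\lambda,\eta$, we have $m=3$, and the mutually different eigenvalues are $\gamma_1=1$, $\gamma_2=\lambda$, $\gamma_3=\eta$. By the Perron--Frobenius theorem the eigenvalue $1$ is automatically positive, so condition (i) of Theorem~\ref{existence}, namely that $P$ is reversible with all eigenvalues positive, reduces exactly to the stated requirement that $P$ is reversible and $\lambda,\eta>0$. Likewise, condition (ii) of Theorem~\ref{existence} specializes to $k_1p_{ij}+k_2p_{ij}^{(2)}\ge 0$ for all $i\neq j$, which matches the stated form verbatim. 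Thus the only substantive task is to verify that the coefficients $k_1,k_2$ determined by the linear system \eqref{linear} coincide with the closed-form expressions in \eqref{expressions}.

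To carry this out, I would write down the system \eqref{linear} for $m=3$. Using $\log\gamma_1=\log 1=0$, the first equation reads $k_0+k_1+k_2=0$, while the other two involve $\log\lambda$ and $\log\eta$. Subtracting the first equation from each of the others eliminates $k_0$ and leaves the combinations $k_1(\lambda-1)+k_2(\lambda^2-1)$ and $k_1(\eta-1)+k_2(\eta^2-1)$. Factoring $\lambda^2-1=(\lambda-1)(\lambda+1)$ and $\eta^2-1=(\eta-1)(\eta+1)$, and dividing through by $\lambda-1$ and $\eta-1$ (both nonzero since the eigenvalues are distinct), yields the reduced $2\times 2$ system
\begin{equation*}
k_1+(\lambda+1)k_2=\frac{\log\lambda}{\lambda-1},\qquad
k_1+(\eta+1)k_2=\frac{\log\eta}{\eta-1}.
\end{equation*}
Subtracting these two equations gives $k_2$ at once, and back-substitution gives $k_1$.

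The denominator $(\lambda-1)(\eta-1)(\eta-\lambda)$ appearing in \eqref{expressions} is, up to sign, the Vandermonde determinant of the original system, which is nonzero precisely because the eigenvalues are distinct; this guarantees the solution is well defined, consistent with the general discussion preceding Theorem~\ref{existence}. I expect no genuine obstacle here: the argument is a short exercise in linear algebra. The only point requiring a little care is the algebraic simplification in the back-substitution for $k_1$, where the coefficient of $\log\lambda$ must be shown to collapse via $(\eta-1)(\eta-\lambda)+(\lambda+1)(\eta-1)=(\eta-1)(\eta+1)=\eta^2-1$ to produce the exact numerator $(\eta^2-1)\log\lambda-(\lambda^2-1)\log\eta$ claimed in \eqref{expressions}. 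Once this identification is made, Theorem~\ref{existence} immediately yields the stated equivalence.
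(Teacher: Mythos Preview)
Your proposal is correct and follows essentially the same approach as the paper: identify $\gamma_1=1$, $\gamma_2=\lambda$, $\gamma_3=\eta$, solve the $3\times 3$ Vandermonde system \eqref{linear} to obtain the explicit formulas \eqref{expressions}, and then invoke Theorem~\ref{existence}. The paper's proof is in fact terser than yours, merely asserting that solving the system yields \eqref{expressions}; your reduction to a $2\times 2$ system and explicit back-substitution spell out exactly the computation the paper leaves to the reader.
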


\begin{proof}
It is easy to see that the mutually different eigenvalues of $P$ are $\gamma_1 = 1$, $\gamma_2 = \lambda$, and $\gamma_3 = \eta$. Let $k_0,k_1,k_2$ be the solution to the following system of linear equations:
\begin{equation}\left\{
\begin{split}
k_0+k_1\gamma_1+k_2\gamma_1^2 &= \log\gamma_1, \\
k_0+k_1\gamma_2+k_2\gamma_2^2 &= \log\gamma_2, \\
k_0+k_1\gamma_3+k_2\gamma_3^2 &= \log\gamma_3.
\end{split}\right.
\end{equation}
By solving the above system of linear equations, it is easy to check that \eqref{expressions} holds. The rest of the proof follows directly from Theorem \ref{existence}.
\end{proof}

\section{Examples}
In this section, we shall use some examples of $3\times 3$ stochastic matrices to illustrate the main results of this paper. Let $P = (p_{ij})$ be a $3\times 3$ irreducible stochastic matrix. It is well-known that $P$ is reversible if and only if $p_{12}p_{23}p_{31} = p_{21}p_{32}p_{13}$. This result is a direct corollary of Kolmogorov's criterion for reversibility \cite{kelly2011reversibility}, which claims that a discrete-time Markov chain is reversible if and only if the product of the transition probabilities along each cycle and that along its reversed cycle are exactly the same.

\begin{example}
Let
\begin{equation}
P = \begin{pmatrix}
\frac{1}{6} & \frac{1}{3} & \frac{1}{2}\\
\frac{1}{2} & \frac{1}{6} & \frac{1}{3}\\
\frac{1}{3} & \frac{1}{2} & \frac{1}{6}\\
\end{pmatrix}.
\end{equation}
It is easy to check that $p_{12}p_{23}p_{31}\neq p_{21}p_{32}p_{13}$, which shows that $P$ is not reversible. Thus it follows from Theorem \ref{existence} that $P$ is not reversibly embeddable.
\end{example}

\begin{example}
Let
\begin{equation}
P = \begin{pmatrix}
\frac{1}{3} & \frac{1}{2} & \frac{1}{6}\\
\frac{1}{2} & \frac{1}{6} & \frac{1}{3}\\
\frac{1}{6} & \frac{1}{3} & \frac{1}{2}\\
\end{pmatrix}.
\end{equation}
It is easy to check that $P$ is reversible and the three eigenvalues of $P$ are
\begin{equation}
\lambda_1 = 1,\;\;\;\lambda_2 = \frac{\sqrt{3}}{6},\;\;\;\lambda_3 = -\frac{\sqrt{3}}{6}.
\end{equation}
Since $\lambda_3 < 0$, it follows from Theorem \ref{existence} that $P$ is not reversibly embeddable.
\end{example}

\begin{example}
Let
\begin{equation}
P = \begin{pmatrix}
\frac{1}{2} & \frac{2}{5} & \frac{1}{10}\\
\frac{2}{5} & \frac{2}{5} & \frac{1}{5}\\
\frac{1}{10} & \frac{1}{5} & \frac{7}{10}\\
\end{pmatrix}.
\end{equation}
It is easy to check that $P$ is reversible and the three eigenvalues of $P$ are
\begin{equation}
\lambda_1 = 1,\;\;\;\lambda_2 = \frac{3+\sqrt{7}}{10},\;\;\;\lambda_3 = \frac{3-\sqrt{7}}{10},
\end{equation}
which are all positive and mutually different. Let $k_1$ and $k_2$ be the two real numbers defined in \eqref{expressions}. Straightforward calculations show that $k_1p_{13}+k_2p_{13}^{(2)} < 0$. Thus it follows from Theorem \ref{different} that $P$ is not reversibly embeddable.
\end{example}

\begin{example}
Let
\begin{equation}
P = \begin{pmatrix}
\frac{1}{2} & \frac{1}{4} & \frac{1}{4}\\
\frac{1}{4} & \frac{1}{2} & \frac{1}{4}\\
\frac{1}{4} & \frac{1}{4} & \frac{1}{2}\\
\end{pmatrix}.
\end{equation}
It is easy to check that $P$ is reversible and the three eigenvalues of $P$ are
\begin{equation}
\lambda_1 = 1,\;\;\;\lambda_2 = \lambda_3 = \frac{1}{4},
\end{equation}
which are all positive. Since $P$ has a pair of coincident eigenvalues, it follows from Theorem \ref{coincident} that $P$ is reversibly embeddable.
\end{example}

\section{Proof of Theorem \ref{uniqueness}}\label{proof1}
For convenience, we give the following definition.

\begin{definition}\label{candidatedef}
Assume that $P$ is reversible and the eigenvalues of $P$ are all positive. Let $T$ be an $n\times n$ real invertible matrix such that $P = TDT^{-1}$. Then $H = T\log DT^{-1}$ is called a candidate of $P$.
\end{definition}

It is easy to see that if $H$ is a candidate of $P$, then $H$ is a real logarithm of $P$.

\begin{lemma}\label{Jordan}
Let $\lambda$ be a complex number and let
\begin{equation}
A = \begin{pmatrix}
\lambda & 1      &         &         \\
        & \ddots & \ddots  &         \\
        &        & \lambda & 1       \\
        &        &         & \lambda \\
\end{pmatrix}_{n\times n}.
\end{equation}
Then the Jordan canonical form of $e^A$ is
\begin{equation}
\begin{pmatrix}
e^\lambda & 1      &           &          \\
          & \ddots & \ddots    &          \\
          &        & e^\lambda & 1        \\
          &        &           & e^\lambda \\
\end{pmatrix}_{n\times n}.
\end{equation}
\end{lemma}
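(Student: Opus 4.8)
The plan is to split $A$ into its scalar and nilpotent parts and then read off the Jordan structure of $e^A$ from a single rank computation. First I would write $A = \lambda I + N$, where $N$ is the $n\times n$ matrix carrying $1$ on the superdiagonal and $0$ elsewhere, so that $N$ is nilpotent with $N^n = 0$. Since $\lambda I$ commutes with $N$, the exponential factors as
\[
e^A = e^{\lambda I}e^N = e^\lambda\left(I + N + \frac{1}{2!}N^2 + \cdots + \frac{1}{(n-1)!}N^{n-1}\right),
\]
the series terminating because $N$ is nilpotent. This exhibits $e^A$ as an upper triangular matrix all of whose diagonal entries equal $e^\lambda$, so $e^\lambda$ is the only eigenvalue of $e^A$ and its algebraic multiplicity is $n$.

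It then remains to show that $e^A$ consists of a single Jordan block, that is, that the eigenspace of $e^A$ associated with $e^\lambda$ is one-dimensional. To this end I would introduce $M = e^A - e^\lambda I = e^\lambda\bigl(N + \tfrac{1}{2!}N^2 + \cdots\bigr)$. Because the coefficient of $N$ is $e^\lambda \neq 0$ while higher powers of $N$ contribute only strictly above the superdiagonal, the matrix $M$ is strictly upper triangular with every superdiagonal entry equal to $e^\lambda$. Such a matrix has rank exactly $n-1$: its first column vanishes, whereas columns $2,\dots,n$ are linearly independent since their lowest nonzero entries (the superdiagonal entries) lie in the distinct rows $1,\dots,n-1$. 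Consequently the geometric multiplicity of $e^\lambda$ equals $n - \mathrm{rank}(M) = 1$.

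Combining the two observations, $e^A$ has the single eigenvalue $e^\lambda$ with algebraic multiplicity $n$ and geometric multiplicity $1$, so its Jordan canonical form must comprise exactly one Jordan block, necessarily of size $n$, with eigenvalue $e^\lambda$. This is precisely the asserted form.

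I expect the only genuinely substantive step to be the rank argument that pins down the geometric multiplicity; the rest follows routinely from the commuting decomposition $A = \lambda I + N$. A slightly more structural alternative, which I would mention if a fuller picture were wanted, is to write $M = e^\lambda N U$ with $U = I + \tfrac{1}{2!}N + \cdots$ invertible and commuting with $N$; then $\mathrm{rank}(M^k) = \mathrm{rank}(N^k) = n-k$ for every $k$, showing directly that the nilpotent $M$ has the same single-block structure as $N$ and again yielding the claim.
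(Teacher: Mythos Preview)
Your proof is correct and shares the same decomposition $A=\lambda I+N$ and the same computation of $e^A$ as the paper. The only difference is in the final step: you pin down the single-block structure by computing $\mathrm{rank}(e^A-e^\lambda I)=n-1$ (geometric multiplicity~$1$), whereas the paper instead checks that $(e^A-e^\lambda I)^{n-1}=e^{(n-1)\lambda}N^{n-1}\neq 0$ while $(e^A-e^\lambda I)^n=0$ (nilpotency index~$n$). Your closing ``structural alternative'' $M=e^\lambda NU$ with $U$ invertible and commuting with $N$ is in fact exactly what underlies the paper's power computation, so the two arguments are really minor variants of one another.
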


\begin{proof}
Let
\begin{equation}
B = \begin{pmatrix}
0 & 1      &        &   \\
  & \ddots & \ddots &   \\
  &        & 0      & 1 \\
  &        &        & 0 \\
\end{pmatrix}_{n\times n}.
\end{equation}
Then $A = \lambda I+B$. Note that $B^l = 0$ for any $l\geq n$. Thus we have
\begin{equation}
\begin{split}
e^A &= \sum_{k=0}^\infty\frac{1}{k!}(\lambda I+B)^k
= \sum_{k=0}^\infty\frac{1}{k!}\sum_{m=0}^kC_k^m\lambda^mB^{k-m}
= \sum_{m=0}^\infty\lambda^m\sum_{k=m}^{m+n-1}\frac{1}{k!}C_k^mB^{k-m} \\
&= \sum_{m=0}^\infty\lambda^m\sum_{k=0}^{n-1}\frac{1}{(k+m)!}C_{k+m}^mB^k
= \sum_{m=0}^\infty\frac{1}{m!}\lambda^m\sum_{k=0}^{n-1}\frac{1}{k!}B^k
= e^\lambda\sum_{k=0}^{n-1}\frac{1}{k!}B^k.
\end{split}
\end{equation}
In view of the above equation, it is easy to check that $(e^A-e^\lambda I)^{n-1} = e^{(n-1)\lambda}B^{n-1}\neq 0$ but $(e^A-e^\lambda I)^n = 0$. This shows that $e^\lambda$ is an $n$-fold eigenvalue of $e^A$ and the Jordan canonical form of $e^A$ is $e^\lambda I+B$.
\end{proof}

\begin{lemma}\label{candidate}
Assume that $P$ is reversible and the eigenvalues of $P$ are all positive. Then each reversible embedding of $P$ must be a candidate of $P$.
\end{lemma}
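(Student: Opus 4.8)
The plan is to show that any reversible embedding $Q$ is diagonalized by a \emph{real} matrix that simultaneously sends $D$ to $P$, and that the corresponding diagonal part of $Q$ is exactly $\log D$; this is precisely what Definition~\ref{candidatedef} requires. First I would exploit the reversibility of $Q$. Let $Q$ be a reversible embedding of $P$. By the earlier lemma asserting that $\mu$ is the reversible distribution of any reversible embedding, and with $M = \diag(\sqrt{\mu_1},\cdots,\sqrt{\mu_n})$, the matrix $S = MQM^{-1}$ is symmetric. Hence $S$ (and therefore $Q$) is diagonalizable with all eigenvalues real, and by orthogonally diagonalizing $S$ and conjugating back by $M^{-1}$ I obtain a real invertible matrix $T$ with $Q = T\Lambda T^{-1}$, where $\Lambda = \diag(\beta_1,\cdots,\beta_n)$ collects the real eigenvalues of $Q$.

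Next I would pin down these eigenvalues in terms of those of $P$. Since $P = e^Q = Te^\Lambda T^{-1}$, each $e^{\beta_i}$ is an eigenvalue of $P$; because $\beta_i$ is real, $e^{\beta_i} > 0$ and $\beta_i = \log(e^{\beta_i})$ is its genuine real logarithm. Thus the eigenvalues of $Q$ are exactly the real numbers $\log\lambda_1,\cdots,\log\lambda_n$, and since $\log$ is injective on the positive reals, the multiplicity of $\log\gamma_i$ as an eigenvalue of $Q$ equals the multiplicity $n_i$ of $\gamma_i$ for $P$. In particular $e^\Lambda$ is a diagonal matrix whose diagonal is a permutation of that of $D$.

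Finally I would reconcile the two orderings. Choose a permutation matrix $\Pi$ with $\Pi^{-1}D\Pi = e^\Lambda$, and set $T' = T\Pi^{-1}$, so that $T'$ is again real and invertible with $(T')^{-1} = \Pi T^{-1}$. Then $P = Te^\Lambda T^{-1} = T\Pi^{-1}D\Pi T^{-1} = T'D(T')^{-1}$, exhibiting $T'$ as a real invertible matrix diagonalizing $P$ into $D$. Because applying $\log$ to a diagonal matrix commutes with permutation conjugation, $\Pi^{-1}(\log D)\Pi = \log(\Pi^{-1}D\Pi) = \log(e^\Lambda) = \Lambda$, and hence $T'(\log D)(T')^{-1} = T\Lambda T^{-1} = Q$. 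By Definition~\ref{candidatedef} this displays $Q$ as a candidate of $P$, completing the argument.

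The step I expect to be the main obstacle is the eigenvalue-ordering bookkeeping in the last paragraph: the notion of candidate is tied to the one fixed diagonal matrix $D$, whereas the natural diagonalization of $Q$ delivers $e^\Lambda$ with its entries in whatever order the eigenvectors were listed. The permutation $\Pi$ must therefore be chosen so that the \emph{same} matrix $T'$ carries $D$ to $P$ and $\log D$ to $Q$ at once; verifying that $\Pi^{-1}(\log D)\Pi = \Lambda$ is what makes this possible. A secondary point requiring care is the guarantee that the diagonalizing matrix can be taken real, which I would secure through the symmetry of $S$ (equivalently, via Lemma~\ref{basic} applied to the structure $S = MQM^{-1}$).
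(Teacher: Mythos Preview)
Your argument is correct. It differs from the paper's proof in one technical respect that is worth highlighting. The paper establishes that $Q$ is diagonalizable by invoking Lemma~\ref{Jordan}: if $Q$ had a nontrivial Jordan block, then $e^Q=P$ would inherit one, contradicting the diagonalizability of $P$; reversibility of $Q$ is used only to say that the eigenvalues are real, and the existence of a \emph{real} diagonalizing matrix $T$ is then asserted (implicitly relying on the fact that a real matrix with real spectrum, diagonalizable over $\mathbb{C}$, is diagonalizable over $\mathbb{R}$). You instead squeeze more out of reversibility: the symmetry of $MQM^{-1}$ delivers real eigenvalues, diagonalizability, and a real diagonalizing matrix in a single stroke, so Lemma~\ref{Jordan} is never needed. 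Your permutation argument then makes explicit the eigenvalue-ordering step that the paper compresses into the phrase ``the Jordan canonical form of $Q$ must be $\log D$.'' Both routes are valid; yours is the more self-contained and makes the realness of $T$ transparent, while the paper's route shows that diagonalizability of $Q$ can be deduced from diagonalizability of $P$ alone, with reversibility entering only to force real spectrum.
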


\begin{proof}
Let $Q$ be a reversible embedding of $P$. Since $Q$ is reversible, the eigenvalues of $Q$ must be all real numbers. Since $P$ is diagonalizable, it follows from Lemma \ref{Jordan} that $Q$ must be also diagonalizable and thus the Jordan canonical form of $Q$ must be $\log D$. Thus there exists an $n\times n$ real invertible matrix $T$ such that $Q = T\log DT^{-1}$. Thus $P = e^Q = TDT^{-1}$. This shows that $Q$ is a candidate of $P$.
\end{proof}

\begin{lemma}\label{unique}
Assume that $P$ is reversible and the eigenvalues of $P$ are all positive. Then the candidate of $P$ is unique.
\end{lemma}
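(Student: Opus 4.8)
The plan is to show that every candidate of $P$, regardless of which diagonalizing matrix $T$ is used, is forced to equal the single explicit polynomial expression $k_0I+k_1P+\cdots+k_{m-1}P^{m-1}$ already recorded in \eqref{combination}. Uniqueness then follows immediately because that expression does not reference $T$.

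First I would recall the scalars $k_0,k_1,\cdots,k_{m-1}$ determined by the interpolation system \eqref{linear}, which by construction satisfy $\log\gamma_i = k_0+k_1\gamma_i+\cdots+k_{m-1}\gamma_i^{m-1}$ for each distinct eigenvalue $\gamma_i$. Since $D=\diag(\gamma_1,\cdots,\gamma_1,\cdots,\gamma_m,\cdots,\gamma_m)$ is diagonal, applying this one fixed polynomial entrywise to its diagonal entries yields the identity
\begin{equation*}
\log D = k_0I+k_1D+\cdots+k_{m-1}D^{m-1}
\end{equation*}
as an equality of diagonal matrices.

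Next, let $H=T\log D\,T^{-1}$ be an arbitrary candidate, so $T$ is \emph{some} real invertible matrix with $P=TDT^{-1}$. Conjugating the displayed identity by $T$ and using $TD^jT^{-1}=(TDT^{-1})^j=P^j$ for every $j$, I obtain
\begin{equation*}
H = T\log D\,T^{-1} = k_0I+k_1P+\cdots+k_{m-1}P^{m-1},
\end{equation*}
which is exactly \eqref{combination}. The right-hand side is determined entirely by $P$ and by the eigenvalue data feeding \eqref{linear}; it contains no trace of the particular $T$. Hence any two candidates of $P$, even if they come from different diagonalizing matrices, coincide, and the candidate of $P$ is unique.

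There is essentially no genuine obstacle in this argument: it rests solely on the fact that $\log D$ is one fixed polynomial in $D$ and that conjugation by $T$ converts that polynomial in $D$ into the same polynomial in $P$. The only point meriting a moment's care is that $D$ carries repeated diagonal entries, but this causes no difficulty precisely because $D$ is already diagonal, so it suffices that the interpolating polynomial agrees with $\log$ at the distinct values $\gamma_1,\cdots,\gamma_m$; no derivative-matching (Jordan-block) conditions arise, in contrast to the situation handled by Lemma \ref{Jordan}.
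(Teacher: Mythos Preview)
Your proof is correct, but it takes a different route from the paper's own argument. The paper proves Lemma~\ref{unique} by a direct commutation computation: given two candidates $H_1=T_1\log D\,T_1^{-1}$ and $H_2=T_2\log D\,T_2^{-1}$, it sets $S=T_2^{-1}T_1$, observes from $T_1DT_1^{-1}=P=T_2DT_2^{-1}$ that $SD=DS$, deduces entrywise that $s_{ij}=0$ whenever $\lambda_i\neq\lambda_j$, and hence that $S\log D=\log D\,S$, which forces $H_1=H_2$. Your argument instead leverages the Lagrange interpolation identity $\log D=k_0I+k_1D+\cdots+k_{m-1}D^{m-1}$ to express \emph{every} candidate as the fixed polynomial $k_0I+k_1P+\cdots+k_{m-1}P^{m-1}$, which is manifestly independent of $T$. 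Your route is slightly slicker in that it simultaneously proves uniqueness and produces the explicit formula \eqref{combination} that the paper needs anyway for Theorem~\ref{existence}; the paper's route is more self-contained at the level of Lemma~\ref{unique} itself, since it does not require first setting up the interpolation system~\eqref{linear}. Both approaches ultimately encode the same fact---that $\log D$ is a function of $D$ alone and hence commutes with anything commuting with $D$---but yours makes the polynomial realization of that function explicit.
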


\begin{proof}
Let $H_1$ and $H_2$ be two candidates of $P$. Thus there exist two $n\times n$ real invertible matrices $T_1$ and $T_2$ such that $H_1 = T_1\log DT_1^{-1}$ and $H_2 = T_2\log DT_2^{-1}$. This shows that
\begin{equation}
T_1DT_1^{-1} = e^{H_1} = P = e^{H_2} = T_2DT_2^{-1}.
\end{equation}
Let $S = (s_{ij}) = T_2^{-1}T_1$. It is easy to see that $SD = DS$, which implies that $s_{ij}\lambda_j = \lambda_is_{ij}$ for any $i,j = 1,2,\cdots n$. This shows that $s_{ij} = 0$ whenever $\lambda_i\neq\lambda_j$. Thus whether $\lambda_i=\lambda_j$ or $\lambda_i\neq\lambda_j$, we always have $s_{ij}\log\lambda_j = \log\lambda_is_{ij}$ for any $i,j = 1,2,\cdots n$, which is equivalent to saying that $S\log D = \log DS$. Thus we obtain that
\begin{equation}
H_1 = T_1\log DT_1^{-1} = T_2\log DT_2^{-1} = H_2.
\end{equation}
This shows that the candidate of $P$ is unique.
\end{proof}

We are now in a position to prove Theorem \ref{uniqueness}.
\begin{proof}[Proof of Theorem \ref{uniqueness}]
Assume that $P$ is reversibly embeddable. By Lemma \ref{assumptions}, $P$ is reversible and the eigenvalues of $P$ are all positive. Since $H$ is a candidate of $P$, it follows from Lemmas \ref{candidate} and \ref{unique} that the reversible embedding of $P$ must be the unique candidate $H$ of $P$. Thus $P$ has at most one reversible embedding.
\end{proof}

\section{Proof of Theorem \ref{existence}}\label{proof2}
\begin{lemma}\label{reversible}
Assume that $P$ is reversible and the eigenvalues of $P$ are all positive. Let $H = (h_{ij})$ be the unique candidate of $P$. Then $\mu_ih_{ij} = \mu_jh_{ji}$ for any $i = 1,2,\cdots,n$.
\end{lemma}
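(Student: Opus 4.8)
The plan is to exploit the polynomial representation of $H$ established in \eqref{combination} together with the symmetrization trick already used in Lemmas \ref{basic} and \ref{assumptions}. By Lemma \ref{unique} the candidate of $P$ is unique, and the matrix $H$ constructed in \eqref{H} is a candidate; hence the $H$ in the statement coincides with the one given by \eqref{combination}, namely $H = k_0I + k_1P + \cdots + k_{m-1}P^{m-1}$, a polynomial in $P$. This is the crucial structural fact I would use, since a polynomial in a matrix behaves well under conjugation.

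First I would set $M = \diag(\sqrt{\mu_1},\sqrt{\mu_2},\cdots,\sqrt{\mu_n})$. Since $\mu$ is a reversible distribution of $P$, the matrix $S = MPM^{-1}$ is symmetric, exactly as in the proof of Lemma \ref{basic}. Because $M$ is diagonal and invertible, conjugation by $M$ commutes with taking powers, so $MP^kM^{-1} = (MPM^{-1})^k = S^k$ for every $k$. Applying this termwise to \eqref{combination} gives $MHM^{-1} = k_0I + k_1S + \cdots + k_{m-1}S^{m-1}$, which is a polynomial in the symmetric matrix $S$ and is therefore itself symmetric.

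It then remains only to read off the matrix entries. Writing $(MHM^{-1})_{ij} = \sqrt{\mu_i/\mu_j}\,h_{ij}$, the symmetry of $MHM^{-1}$ yields $\sqrt{\mu_i/\mu_j}\,h_{ij} = \sqrt{\mu_j/\mu_i}\,h_{ji}$ for all $i,j$, and multiplying through by $\sqrt{\mu_i\mu_j}$ gives the detailed balance identity $\mu_ih_{ij} = \mu_jh_{ji}$.

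There is no serious obstacle here: the argument is a direct transfer of the symmetrization already employed for $P$ to its matrix logarithm, made possible by the fact that $H$ is a polynomial in $P$. The only point that deserves a word of care is the justification that the candidate $H$ really is this polynomial, i.e.\ that \eqref{combination} describes the unique candidate, which is exactly what Lemma \ref{unique} and the derivation preceding \eqref{combination} provide. An alternative route that avoids invoking \eqref{combination} would be to work directly from the diagonalization $P = TDT^{-1}$ and argue that $S = MPM^{-1}$ admits an orthogonal diagonalization, forcing $MHM^{-1} = MT(\log D)T^{-1}M^{-1}$ to be symmetric; but the polynomial route is shorter and self-contained, so I would take it.
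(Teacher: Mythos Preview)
Your proof is correct and reaches the same key intermediate fact as the paper---namely that $MHM^{-1}$ is symmetric---but by a different mechanism. The paper argues via the spectral theorem: since $S = MPM^{-1}$ is symmetric, there is an orthogonal $R$ with $RSR^T = D$, so $P = (M^{-1}R^T)D(RM)$ is a diagonalization of $P$, whence by the definition (and uniqueness) of the candidate $H = (M^{-1}R^T)(\log D)(RM)$, giving $MHM^{-1} = R^T(\log D)R$, which is manifestly symmetric. You instead use the polynomial identity \eqref{combination}: $H$ is a polynomial in $P$, so $MHM^{-1}$ is the same polynomial in the symmetric matrix $S$, hence symmetric. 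Amusingly, the alternative you sketch at the end and decline to pursue is precisely the paper's argument. Your route trades the spectral theorem for the interpolation machinery behind \eqref{combination}; the paper's route stays closer to the raw definition of the candidate and needs no detour through \eqref{combination}, while yours is a clean transfer of structure once \eqref{combination} is available. Both are short and fully rigorous.
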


\begin{proof}
Let $M = \diag(\sqrt{\mu_1},\sqrt{\mu_2},\cdots,\sqrt{\mu_n})$. Since $\mu$ is a reversible distribution of $P$, it is easy to see that $S = MPM^{-1}$ is a symmetric matrix. Thus there exists an orthogonal matrix $R$ such that $RSR^T = D$. This shows that $RMPM^{-1}R^T = D$, or equivalently, $P = M^{-1}R^TDRM$. Thus $H = M^{-1}R^T\log DRM$, or equivalently, $MHM^{-1} = R^T\log DR$. This shows that $MHM^{-1}$ is a symmetric matrix, which implies that $\mu_ih_{ij} = \mu_jh_{ji}$ for any $i = 1,2,\cdots,n$.
\end{proof}

\begin{lemma}\label{zero}
Assume that $P$ is reversible and the eigenvalues of $P$ are all positive. Let $H$ be the unique candidate of $P$. Let 1 be the $n$-dim column vector whose components are all 1. Then $H1 = 0$.
\end{lemma}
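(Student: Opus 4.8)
The plan is to build the proof on the polynomial representation \eqref{combination}, namely $H = k_0I+k_1P+\cdots+k_{m-1}P^{m-1}$, which expresses the candidate $H$ as a polynomial in $P$ with the coefficients $k_0,\dots,k_{m-1}$ determined by the linear system \eqref{linear}. Since $P$ is a stochastic matrix, every row sums to $1$, i.e. $P\mathbf{1}=\mathbf{1}$, and therefore $P^{j}\mathbf{1}=\mathbf{1}$ for every $j\geq 0$. Applying $H$ to $\mathbf{1}$ then collapses the polynomial to a scalar multiple of $\mathbf{1}$:
\begin{equation}
H\mathbf{1}=\sum_{j=0}^{m-1}k_jP^{j}\mathbf{1}=\Big(\sum_{j=0}^{m-1}k_j\Big)\mathbf{1}.
\end{equation}
Thus the whole statement reduces to the single scalar identity $k_0+k_1+\cdots+k_{m-1}=0$.

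To obtain this identity I would exploit that $P$ is irreducible and stochastic, so $1$ is an eigenvalue of $P$ and hence appears among the mutually different eigenvalues $\gamma_1,\dots,\gamma_m$; say $\gamma_i=1$ for some $i$. The corresponding equation in the system \eqref{linear} reads $k_0+k_1\gamma_i+\cdots+k_{m-1}\gamma_i^{m-1}=\log\gamma_i$, which upon substituting $\gamma_i=1$ becomes exactly $k_0+k_1+\cdots+k_{m-1}=\log 1=0$. Combining this with the displayed computation yields $H\mathbf{1}=0$, completing the argument. I do not anticipate a genuine obstacle here: the only point requiring care is to recall that $1$ is always an eigenvalue of a stochastic matrix, so that it really is one of the $\gamma_i$ and the "$\log 1=0$" row of \eqref{linear} is available.

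As a sanity check and alternative, I would also note a purely spectral route that bypasses \eqref{combination} and relies only on Definition \ref{candidatedef}. Writing $H=T\log D\,T^{-1}$ and $P=TDT^{-1}$, and setting $v=T^{-1}\mathbf{1}$, the eigenrelation $P\mathbf{1}=\mathbf{1}$ gives $Dv=v$, so $(\lambda_i-1)v_i=0$ for each coordinate; hence $v_i=0$ unless $\lambda_i=1$. On precisely those coordinates where $v_i\neq 0$ one has $\lambda_i=1$ and therefore $\log\lambda_i=0$, so $(\log D)v=0$ and consequently $H\mathbf{1}=T(\log D)v=0$. This version also transparently handles the case in which $1$ is a multiple eigenvalue of $P$, since the argument never uses the multiplicity of the eigenvalue $1$.
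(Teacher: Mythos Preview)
Your argument is correct, and both routes you sketch are valid; they are, however, genuinely different from the paper's proof. The paper argues from the relation $e^{H}\mathbf{1}=P\mathbf{1}=\mathbf{1}$ alone: setting $B=\sum_{k\geq 0}\frac{1}{(k+1)!}H^{k}$ one has $BH=e^{H}-I$, so $BH\mathbf{1}=0$; then $B$ is shown to be invertible by computing its eigenvalues via the Jordan form $\log D$ of $H$ (the function $\lambda\mapsto\sum_{k\geq 0}\lambda^{k}/(k+1)!$ equals $(e^{\lambda}-1)/\lambda$ for $\lambda\neq 0$ and $1$ for $\lambda=0$, hence never vanishes), and $H\mathbf{1}=0$ follows. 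Your first route is more elementary because it exploits the polynomial identity \eqref{combination} already derived in the paper, collapsing the claim to the single row of \eqref{linear} with $\gamma_i=1$. Your second, spectral route is the cleanest of the three: it uses only Definition~\ref{candidatedef} and the observation that $T^{-1}\mathbf{1}$ is supported on coordinates where $D$ acts by $1$ and hence $\log D$ acts by $0$. What the paper's approach buys is that it proceeds directly from $e^{H}=P$ without invoking either the polynomial representation or the explicit eigenvector $T^{-1}\mathbf{1}$, at the cost of introducing the auxiliary series $B$ and verifying its invertibility; your approaches trade that generality for brevity by using structure the paper has already put in place.
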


\begin{proof}
Since $H$ is a candidate of $P$, we have $e^H1 = P1 = 1$. Let
\begin{equation}
B = \sum_{k=0}^\infty\frac{1}{(k+1)!}H^k.
\end{equation}
It follows from $e^H1 = 1$ that $BH1 = 0$. Since the Jordan canonical form of $H$ is $\log D$, the Jordan canonical form of $B$ is
\begin{equation}
\sum_{k=0}^\infty\frac{1}{(k+1)!}(\log D)^k.
\end{equation}
Simple calculations show that
\begin{equation}
\sum_{k=0}^\infty\frac{1}{(k+1)!}\lambda^k =
\begin{cases}
1, &\textrm{if $\lambda = 0$}, \\
\frac{1}{\lambda}(e^\lambda-1), &\textrm{if $\lambda\neq 0$}.
\end{cases}
\end{equation}
Thus the eigenvalues of $B$ are all nonzero, which shows that $B$ is invertible. Since $BH1 = 0$, we have $H1 = 0$.
\end{proof}

\begin{remark}
In Definition \ref{candidatedef}, Lemma \ref{unique}, and Lemma \ref{zero}, the assumption ``$P$ is reversible" can be weakened to ``$P$ is diagonalizable".
\end{remark}

We are now in a position to prove Theorem \ref{existence}.

\begin{proof}[Proof of Theorem \ref{existence}]
By Lemma \ref{assumptions}, we only need to prove that if the condition (i) holds, then $P$ is reversibly embeddable if and only if the condition (ii) holds.

Assume that the condition (i) holds. Let $H$ be the unique candidate of $P$. It follows from Lemma \ref{condition} that $P$ is reversibly embeddable if and only if $H$ is a reversible generator matrix. By Lemmas \ref{reversible} and \ref{zero}, $H$ is reversible and the sum of entries in each row of $H$ is zero. This shows that $H$ is a reversible generator matrix if and only if the off-diagonal entries of $H$ are all nonnegative. From \eqref{combination}, it is easy to see that $H$ is a reversible generator matrix if and only if the off-diagonal entries of $k_0I+k_1P+\cdots+k_{m-1}P^{m-1}$ are all nonnegative. Thus $P$ is reversibly embeddable if and only if the condition (ii) holds. This completes the proof of this theorem.
\end{proof}

\section*{Acknowledgements}
The author is grateful to Prof. Y. Chen and the anonymous reviewers for their valuable comments and suggestions on the present work.

\setlength{\bibsep}{5pt}
\small\bibliographystyle{pnas2009}
\bibliography{embedding}
\end{document}